\title{A novel discrete variational derivative method\\ using ``average-difference methods''}
\author{Daisuke Furihata$^{1)}$, Shun Sato$^{2),}\footnote{Corresponding author, E-mail: shun\_sato@mist.i.u-tokyo.ac.jp}$ , Takayasu Matsuo$^{2)}$\\
$1)$ Osaka University \hspace{5pt} $2)$ The University of Tokyo}
\date{May, 2016}
\newcommand{\rd}{\mathrm{d}}
\newcommand{\RR}{\mathbb{R}}
\newcommand{\ud}[2]{u^{\left( #1 \right)}_{ #2 }}
\newcommand{\fd}{\delta^+}
\newcommand{\cd}{\delta^{\langle 1 \rangle}}
\newcommand{\dps}{\delta_{\mathrm{PS}}}
\newcommand{\fa}{\mu^+}
\newcommand{\im}{\mathrm{i}}
\newtheorem{theorem}{Theorem}
\newtheorem{proposition}{Proposition}
\newtheorem{proof}{Proof}
\def\qed{\hfill $\Box$}
\begin{document}

\maketitle

\begin{abstract}
We consider structure-preserving methods for conservative systems, 
which rigorously replicate the conservation property yielding better numerical solutions. 
There, 
corresponding to the skew-symmetry of the differential operator, 
that of difference operators is essential to the discrete conservation law. 
Unfortunately, however, when we employ the standard central difference operator, 
the simplest one, 
the numerical solutions often suffer from undesirable spatial oscillations. 
In this letter, we propose a novel ``average-difference method,'' 
which is tougher against such oscillations, 
and combine it with an existing conservative method. 
Theoretical and numerical analysis in the linear case show the superiority of the proposed method. 
\end{abstract}

\section{Introduction}

In this letter, we consider the numerical integration of the partial differential equation (PDE) in the form
\begin{equation}
u_{tx} = \frac{\delta G}{\delta u}, \qquad \mathcal{H} (u) := \int^L_0 G(u,u_x,\dots) \rd x,
\label{eq_nlKG}
\end{equation}
where subscripts $ t $ or $x$ denote the partial differentiation with respect to $ t $ or $x$, 
and $ \delta G / \delta u $ is 
the variational derivative of $G$. 
We assume the periodic boundary condition $ u(t,x+L ) = u(t,x) \ ( \forall t \in \RR_+ :=[0,+\infty), \  \forall x \in \RR ) $, where $ L \in \RR_+ $ is a constant.  
When the derivatives of $u$ do not appear in $ \mathcal{H} $, 
the equation~\eqref{eq_nlKG} is called the (nonlinear) Klein--Gordon equation in light-cone coordinates. 
Moreover, the class of PDEs in the form~\eqref{eq_nlKG} is closely related to 
the Ostrovsky equation~\cite{O1978}, the short pulse equation~\cite{SW2004}, etc. 
For their numerical treatments, due to the possible indefiniteness caused by the spatial derivative in the left-hand side, 
it seems a systematic numerical framework for~\eqref{eq_nlKG} is yet to be investigated, 
though a few exceptions for specific cases can be found (see, e.g., \cite{YMS2010,MYM2012}).

In this letter, we focus on a certain class of conservative methods. 
Under the periodic boundary condition, the target equation~\eqref{eq_nlKG} 
has the conserved quantity~$ \mathcal{H} $: 
\begin{align}
\frac{\rd \mathcal{H}}{\rd t}
&= \int^L_0 \frac{\delta G}{\delta u} u_t \, \rd x = \int^L_0 \hspace{-5pt} u_{tx} u_t \, \rd x \notag \\
&= \left[ \frac{1}{2}  u_t^2 \right]^L_0 - \int^L_0 u_t u_{tx} \rd x = 
- \int^L_0 u_t u_{tx} \rd x = 0. \hspace{6pt}
\label{eq_conservation}
\end{align}
Note that the skew-symmetry of the differential operator $ \partial_x := \partial / \partial x $ 
is crucial here. 
A numerical scheme is called conservative when it replicates such a conservation property 
(see, e.g., \cite{FM2011,CGMMOOQ2012}). 
The numerical solutions obtained by such schemes are often more stable than those of general-purpose methods. 
There, the crucial point for the discrete conservation law is 
the skew-symmetry of difference operator, which corresponds to that of the differential operator; 
when one tries to construct a conservative finite-difference scheme for the equation~\eqref{eq_nlKG}, 
the differential operator $ \partial_x $ in left-hand side must be replaced by one of the skew-symmetric difference operators, for example, the central difference operators, 
the compact finite difference operators~(see, e.g., Kanazawa--Matsuo--Yaghchi~\cite{KMY2012}), 
and the Fourier-spectral difference operator (see, e.g., Fornberg~\cite{Fornberg}). 
This is intrinsically indispensable, at least to the best of the present authors' knowledge. 
This, however, at the same time, leads to an undesirable side effect that 
the numerical solutions tend to suffer from spatial oscillations. 

In this letter, to work around this technical difficulty, 
we propose a novel ``average-difference method,'' 
which is tough against such undesirable spatial oscillations. 
A similar method has been, in fact, 
already investigated by Nagisa~\cite{NagisaMT}. 
However, he used this method for advection-type equations, 
and concluded the method was unfortunately not more advantageous than existing methods. 
In this letter, we instead construct an average-difference method for the PDE~\eqref{eq_nlKG}, 
and combine it with the idea of conservation mentioned above. 
Then we compare the proposed and existing methods in the case of the linear Klein--Gordon equation, 
which is the simplest case with $ G (u) = u^2/2 $. 
As a result, 
the average-difference type method is successfully superior to the existing methods 
in view of the phase speed of each frequency component.


\section{The standard conservative method}
\label{sec_standard}

The conservative scheme for the PDE~\eqref{eq_nlKG} 
can be constructed in the spirit of discrete variational derivative method (DVDM) (see, the monograph~\cite{FM2011} for details).  
There, one utilizes the concept of the ``discrete variational derivative'' and skew-symmetric difference operators. 
The symbol $ \ud{m}{k} $ denotes the approximation $ \ud{m}{k} \approx u ( m \Delta t , k \Delta x )  \ ( m =  0,\dots, M; k \in \mathbb{Z} ) $, 
where $ \Delta t $ and $ \Delta x \, (:= L/K)$ are the temporal and spatial mesh sizes, respectively. 
Here, we assume the discrete periodic boundary condition $ \ud{m}{k+K} = \ud{m}{k} \ ( k \in \mathbb{Z} ) $, 
and thus, we use the notation $ \ud{m}{} := ( \ud{m}{1} , \dots ,\ud{m}{K} )^{\top} $. 
Let us introduce the spatial central difference operator $ \cd_x $ and the temporal forward difference operator $ \fd_t $: 
\begin{align}
\cd_x \ud{m}{k} &= \frac{ \ud{m}{k+1} - \ud{m}{k-1} }{2\Delta x}, &
\fd_t \ud{m}{k} &= \frac{\ud{m+1}{k} - \ud{m}{k}}{\Delta t}.
\end{align}

The discrete counterpart $ \mathcal{H}_{\rd} $ of the functional $\mathcal{H} $ can be defined as 
\begin{equation}
\mathcal{H}_{\rd} \left( \ud{m}{} \right) := \sum_{k=1}^K G_{\rd} \left( \ud{m}{k} \right) \Delta x,
\end{equation}
where $ G_{\rd} ( \ud{m}{k} ) $ is an appropriate approximation of $ G (u,u_x,\dots ) $. 
Then, the discrete variational derivative $ \delta G_{\rd} / \delta ( \ud{m+1}{} ,\ud{m}{} )_k $ 
is defined as a function satisfying 
\begin{equation}
\fd_t \mathcal{H}_{\rd} \left( \ud{m}{k} \right) = \sum_{k=1}^K \frac{\delta G_{\rd}}{ \delta \left( \ud{m+1}{}, \ud{m}{} \right)_k } \fd_t \ud{m}{k} \Delta x. 
\label{eq_dvd_prop}
\end{equation}
For the construction of such one, see~\cite{FM2011}. 
By using the discrete variational derivative, 
we can construct a conservative scheme
\begin{equation}
\cd_x \fd_t \ud{m}{k} = \frac{\delta G_{\rd}}{ \delta \left( \ud{m+1}{}, \ud{m}{} \right)_k }. 
\label{eq_dvdm_cd}
\end{equation}
As stated in the introduction, 
the key ingredient here is the skew-symmetry of the central difference operator. 

\begin{proposition} 
Suppose the numerical scheme~\eqref{eq_dvdm_cd} 
has a solution $ \ud{m}{k} $ under the periodic boundary condition. 
Then, it satisfies $ \mathcal{H}_{\rd} ( \ud{m+1}{} ) = \mathcal{H}_{\rd} ( \ud{m}{} ) $. 
\end{proposition}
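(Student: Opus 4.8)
The plan is to prove the discrete conservation law by showing that the forward difference of $\mathcal{H}_{\rd}$ in time vanishes; since this difference equals $\bigl( \mathcal{H}_{\rd}(\ud{m+1}{}) - \mathcal{H}_{\rd}(\ud{m}{}) \bigr)/\Delta t$, its vanishing is exactly equivalent to the claimed equality. First I would invoke the defining property~\eqref{eq_dvd_prop} of the discrete variational derivative to rewrite $\fd_t \mathcal{H}_{\rd}$ as the discrete sum $\sum_{k=1}^K \frac{\delta G_{\rd}}{\delta \left( \ud{m+1}{}, \ud{m}{} \right)_k} \fd_t \ud{m}{k} \Delta x$. This step requires no computation: it is precisely the summation-by-parts identity that characterizes the discrete variational derivative, so it is quoted directly from~\eqref{eq_dvd_prop}.

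The second step is to substitute the scheme~\eqref{eq_dvdm_cd}, replacing the discrete variational derivative at each $k$ by $\cd_x \fd_t \ud{m}{k}$. Writing $v_k := \fd_t \ud{m}{k}$, the expression for $\fd_t \mathcal{H}_{\rd}$ collapses to the single quadratic sum $\sum_{k=1}^K (\cd_x v_k) v_k \Delta x$, and the entire matter reduces to showing that this sum is zero.

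The crucial ingredient is the skew-symmetry of the central difference operator $\cd_x$ under the periodic boundary condition, which is the discrete analogue of the integration by parts used in~\eqref{eq_conservation}. I would first establish the summation-by-parts identity $\sum_{k=1}^K (\cd_x a_k) b_k = -\sum_{k=1}^K a_k (\cd_x b_k)$ for arbitrary $K$-periodic sequences $\{ a_k \}$ and $\{ b_k \}$; the proof is a single index shift in the defining formula of $\cd_x$, where periodicity is exactly what cancels the boundary contributions. Setting $a = b = v$ gives $\sum_{k=1}^K (\cd_x v_k) v_k \Delta x = -\sum_{k=1}^K (\cd_x v_k) v_k \Delta x$, so this quantity equals its own negative and hence vanishes. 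Therefore $\fd_t \mathcal{H}_{\rd} = 0$, which is the assertion.

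The only place demanding care is this skew-symmetry step: one must verify that the periodic condition $\ud{m}{k+K} = \ud{m}{k}$ genuinely removes every boundary term produced by the index shift, mirroring the vanishing of $[ u_t^2/2 ]_0^L$ in the continuous computation~\eqref{eq_conservation}. Everything else is a direct substitution, so I expect this discrete integration-by-parts identity to be the sole nontrivial point.
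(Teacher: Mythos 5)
Your proposal is correct and follows essentially the same route as the paper: invoke the defining property~\eqref{eq_dvd_prop} of the discrete variational derivative, substitute the scheme~\eqref{eq_dvdm_cd}, and kill the resulting quadratic sum $\sum_{k=1}^K (\cd_x v_k)\, v_k \,\Delta x$ via the skew-symmetry (summation-by-parts) of $\cd_x$ under periodicity. You simply spell out two details the paper leaves implicit---the index-shift proof of skew-symmetry and the ``equals its own negative'' argument with $a=b=v$---which is a fine level of rigor but not a different method.
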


\begin{proof}
{\em 
Thanks to the definition~\eqref{eq_dvd_prop} of the discrete variational derivative,  
we can follow the line of the discussion~\eqref{eq_conservation} as follows: 
\begin{align}
\fd_t \mathcal{H}_{\rd} \left( \ud{m}{} \right)
&= \sum_{k=1}^K \frac{\delta G_{\rd}}{ \delta \left( \ud{m+1}{}, \ud{m}{} \right)_k } \fd_t \ud{m}{k} \Delta x \\
&= \sum_{k=1}^K \left( \cd_x \fd_t \ud{m}{k} \right) \fd_t \ud{m}{k} \Delta x, 
\end{align}
whose right-hand side vanishes due to the skew-symmetry of the central difference operator~$ \cd_x $: 
\begin{equation}
\sum_{k=1}^K u_k \cd_x v_k \Delta x = - \sum_{k=1}^K \left( \cd_x u_k \right) v_k \Delta x. 
\end{equation}
holds for any $ u, v \in \RR^K $. }
\qed
\end{proof}

The discrete conservation law can also be proved similarly for the other skew-symmetric difference operators.

\section{``Average-difference method''}
\label{sec_ad}

In this section, we propose the novel method. 
There, instead of the single skew-symmetric difference operator, 
we employ the pair of the forward difference and average operators: 
\begin{align}
\fd_x \ud{m}{k} &= \frac{\ud{m}{k+1} - \ud{m}{k}}{\Delta x}, &
\fa_x \ud{m}{k} &= \frac{\ud{m}{k+1}+\ud{m}{k}}{2}.
\end{align}
The average-difference method for the equation~\eqref{eq_nlKG} can be written in the form 
\begin{equation}
\fd_x \fd_t \ud{m}{k} = \fa_x \frac{ \delta G_{\rd} }{ \delta \left( \ud{m+1}{}, \ud{m}{} \right)_k }. 
\label{eq_dvdm_ad}
\end{equation}
The name ``average-difference'' comes from the idea of approximating $ \partial_x $ with the pair of $ ( \fd_x , \fa_x ) $; 
this makes sense for more general PDEs, 
and thus is independent of any conservation properties. 
Still, in this letter we focus on~\eqref{eq_nlKG} and~\eqref{eq_dvdm_ad}.  
 
Although it is constructed in the spirit of DVDM, 
now the forward difference operator $ \fd_x $ loses the apparent skew-symmetry, 
and accordingly, 
the proof of the discrete conservation law becomes unobvious. 
A similar proof can be found in Nagisa~\cite{NagisaMT}. 

\begin{theorem}
Suppose the average-difference method~\eqref{eq_dvdm_ad} 
has a solution $ \ud{m}{k} $ under the periodic boundary condition. 
Then, it satisfies $ \mathcal{H}_{\rd} ( \ud{m+1}{} ) = \mathcal{H}_{\rd} ( \ud{m}{} ) $. 
\label{thm_dvdm_ad}
\end{theorem}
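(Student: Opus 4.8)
The plan is to imitate the structure of the proof of the Proposition: use the defining property \eqref{eq_dvd_prop} of the discrete variational derivative to reduce the claim to the vanishing of a single bilinear sum, and then extract from the scheme \eqref{eq_dvdm_ad} a hidden skew-symmetry that makes this sum vanish. Writing $ v_k := \fd_t \ud{m}{k} $ and $ w_k := \delta G_{\rd} / \delta ( \ud{m+1}{}, \ud{m}{} )_k $, property \eqref{eq_dvd_prop} gives $ \fd_t \mathcal{H}_{\rd} ( \ud{m}{} ) = \sum_{k=1}^K w_k v_k \Delta x $, so everything reduces to proving $ \sum_{k=1}^K w_k v_k \Delta x = 0 $, while the scheme \eqref{eq_dvdm_ad} supplies the constraint $ \fd_x v_k = \fa_x w_k $. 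Throughout I write $ \langle a,b \rangle := \sum_{k=1}^K a_k b_k \Delta x $.

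First I would record the summation-by-parts rules under the periodic boundary condition, $ \fd_x^\ast = -\bd_x $ and $ \fa_x^\ast = \ba_x $, and then isolate the decisive observation: the operator identity $ \fa_x \bd_x = \fd_x \ba_x = \cd_x $, where all four operators are viewed as polynomials in the shift $ S a_k = a_{k+1} $ (each side equals $ (S - S^{-1})/(2\Delta x) $). This is precisely the sense in which the average--difference pair $ (\fd_x, \fa_x) $ secretly reproduces the skew-symmetric central operator $ \cd_x $, compensating for the skew-symmetry that $ \fd_x $ alone has lost.

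With this in hand I see two routes. The clean route: whenever $ \fa_x $ is invertible, the constraint solves as $ w = \fa_x^{-1} \fd_x v $, and the identity $ \fa_x \bd_x = \fd_x \ba_x $ shows that $ \fa_x^{-1} \fd_x $ is skew-symmetric, so $ \langle w, v \rangle = \langle \fa_x^{-1} \fd_x v, v \rangle = 0 $. The robust route, valid for every $ K $: contract the scheme with $ \fa_x v $ to obtain $ \langle \fa_x w, \fa_x v \rangle = \langle \fd_x v, \fa_x v \rangle $, whose right-hand side telescopes to $ 0 $ because $ (\fd_x v_k)(\fa_x v_k) = (v_{k+1}^2 - v_k^2)/(2\Delta x) $; expanding the left-hand side reduces this to $ 2\sum_k w_k v_k \Delta x + \sum_k w_k (v_{k+1}+v_{k-1}) \Delta x = 0 $. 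I would then substitute $ v_{k\pm 1} $ from the constraint to get $ v_{k+1}+v_{k-1} = 2 v_k + (\Delta x/2)(w_{k+1}-w_{k-1}) $, use $ \sum_k w_k (w_{k+1}-w_{k-1}) = 0 $ (the skew-symmetry of $ \cd_x $ established in the proof of the Proposition, applied to $ w $), and thereby find $ \sum_k w_k (v_{k+1}+v_{k-1}) \Delta x = 2 \sum_k w_k v_k \Delta x $; combining the two relations yields $ 4 \sum_k w_k v_k \Delta x = 0 $.

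I expect the main obstacle to be exactly where the clean route fails: $ \fa_x $ is singular whenever $ K $ is even, its kernel being the sawtooth mode, which is itself the physical source of the oscillations this letter targets. Covering that case demands the robust route, or equivalently the observation that solvability of the scheme forces $ v $ to be orthogonal to $ \ker \fa_x $, so that the non-uniqueness of $ w $ does not affect $ \sum_k w_k v_k \Delta x $. Once the shift-operator viewpoint is adopted, verifying the identity $ \fa_x \bd_x = \fd_x \ba_x = \cd_x $ and the telescoping cancellations is entirely routine, and the content of the theorem lies in recognizing this concealed skew-symmetry.
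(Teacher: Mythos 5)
Your proposal is correct, and your ``robust route'' is a complete proof valid for every $K$: the contraction of the scheme with $\fa_x v$, the telescoping $\sum_{k=1}^K(\fd_x v_k)(\fa_x v_k)=\sum_{k=1}^K(v_{k+1}^2-v_k^2)/(2\Delta x)=0$, the substitution $v_{k+1}+v_{k-1}=2v_k+(\Delta x/2)(w_{k+1}-w_{k-1})$, and the cancellation $\sum_{k=1}^K w_k(w_{k+1}-w_{k-1})=0$ all hold under the discrete periodic boundary condition, and together they yield $4\sum_{k=1}^K w_kv_k\Delta x=0$, hence conservation via \eqref{eq_dvd_prop}. The paper reaches the same sum (its $a_k,b_k$ are your $w_k,v_k$) but organizes the algebra around one product identity, $\frac{\alpha^+\beta^++\alpha\beta}{2}=\left(\frac{\alpha^++\alpha}{2}\right)\left(\frac{\beta^++\beta}{2}\right)+\frac14(\alpha^+-\alpha)(\beta^+-\beta)$: summing it splits $\sum_k a_kb_k$ into $\sum_k(\fa_x a_k)(\fa_x b_k)+\frac{\Delta x^2}{4}\sum_k(\fd_x a_k)(\fd_x b_k)$, and substituting the scheme into \emph{each} term produces the perfect differences $\frac12\fd_x(b_k^2)$ and $\frac{\Delta x^2}{8}\fd_x(a_k^2)$, which telescope to zero. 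So your robust route is essentially a rearrangement of the same ingredients (the scheme used twice, plus two quadratic periodicity cancellations, one in $v$ and one in $w$) rather than a new argument; the paper's version buys compactness, while yours makes explicit exactly which inner products vanish, namely $\langle\fa_x w,\fa_x v\rangle=0$ together with $\langle\fa_x w,\fa_x v\rangle=\langle w,v\rangle$. Your ``clean route,'' by contrast, is genuinely different: the factorization $\fa_x\bd_x=\fd_x\ba_x=\cd_x$ and the resulting skew-symmetry of $\fa_x^{-1}\fd_x$ is precisely the kind of ``generalized skew-symmetry'' that the authors themselves point to in the concluding remarks as the conceptual replacement for their ``cumbersome'' proof, and your diagnosis of its limitation is also right: $\fa_x$ is singular exactly when $K$ is even (sawtooth kernel), so that route alone does not cover the general case, which is why an identity-based computation --- the paper's or your robust one --- is still needed.
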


\begin{proof}
{\em 
By using the definition~\eqref{eq_dvd_prop} of the discrete variational derivative, we see that
\begin{equation}
\fd_t \mathcal{H}_{\rd} \left( \ud{m}{} \right)
= \sum_{k=1}^{K} \frac{ \delta G_{\rd} }{\delta ( \ud{m+1}{} , \ud{m}{} )_k } \fd_t \ud{m}{k} \Delta x.  
\end{equation}
Here, for brevity, we introduce the notation
\[ a_k = \frac{ \delta G_{\rd} }{\delta ( \ud{m+1}{} , \ud{m}{} )_k }, \qquad b_k = \fd_t \ud{m}{k}. \]
Note that the equation~\eqref{eq_dvdm_ad} implies the relation $ \fd_x b_k = \fa_x a_k $. 
By using the identity
\begin{align}
 \frac{\alpha^+ \beta^+ + \alpha \beta}{2} 
= \left( \frac{\alpha^+ + \alpha}{2} \right) \left( \frac{\beta^+ + \beta}{2} \right) + \frac{1}{4} \left( \alpha^+ - \alpha \right) \left( \beta^+ - \beta \right), 
\end{align}
which holds for any $ \alpha,  \alpha^+ , \beta, \beta^+ \in \RR $, 
we see 
\begin{align*}
\sum_{k=1}^K a_k b_k 
&= \sum_{k=1}^K \frac{ a_{k+1} b_{k+1} + a_k b_k }{2} \\
&= \sum_{k=1}^K \left( \left( \fa_x a_k \right) \left( \fa_x b_k \right) + \frac{\Delta x^2}{4}  \left( \fd_x a_k \right) \left( \fd_x b_k \right) \right) \\
&= \sum_{k=1}^K \left( \left( \fd_x b_k \right) \left( \fa_x b_k \right) + \frac{\Delta x^2}{4} \left( \fd_x a_k \right) \left( \fa_x a_k \right) \right) \\
&= \sum_{k=1}^K \left( \frac{1}{2} \left( \fd_x b_k^2  + \frac{\Delta x^2}{4} \fd_x a_k^2 \right) \right) =0,
\end{align*}
which proves the theorem. }
\qed
\end{proof}

\section{Analysis in the linear Klein--Gordon equation}
\label{sec_anal_lkg}


In order to conduct a detailed analysis, 
we consider the simplest case, the linear Klein--Gordon equation 
\begin{align}
u_{tx} &= \frac{ \delta G }{ \delta u } = u, &
\mathcal{H} (u) &:= \frac{1}{2} \int^{2 \pi}_0 u^2 \rd x
\label{eq_lKG_vf}
\end{align}
under the periodic domain with the period $ L = 2 \pi $. 
The exact solution of the linear Klein--Gordon equation~\eqref{eq_lKG_vf} can be formally written in the form
\begin{equation}
u ( t,x) = \sum_{n \in \mathbb{Z} \setminus \{ 0 \}} a_n \exp \left( - \im \frac{t}{n} \right) \exp \left( \im n x \right), 
\end{equation}
where $ \im $ is the imaginary unit, and $ a_n \in \mathbb{C} $ is determined by the initial condition $ u (0,x) = u_0 (x) $: 
\begin{equation}
a_n = \frac{1}{ 2 \pi} \int^{2 \pi}_0 u_0 (x) \exp \left( - \im n x \right) \rd x. 
\end{equation}
In view of the superposition principle, we consider the single component \linebreak $ \exp ( - \im t / n ) \exp ( \im n x ) $ ($ n \in \mathbb{Z} \setminus \{ 0 \}$).

\subsection{Comparison of phase speeds}

In order to clarify the difference between the standard conservative method and proposed method, 
we consider the following three semi-discretizations
\begin{align}
\cd_x \dot{u}_k &= u_k, \label{eq_lkg_cd}\\
\dps \dot{u}_k &= u_k, \label{eq_lkg_ps}\\
\fd_x \dot{u}_k &= \fa_x u_k, \label{eq_lkg_ad}
\end{align}
where $ u_k (t) \approx u(t, k\Delta x) $ for $ k \in \mathbb{Z} $ ($ u_{k+K} = u_k $). 
Here, $ \dps $ denotes the Fourier-spectral difference operator, i.e., 
\begin{equation}
\dps u_k := \begin{cases} {\displaystyle \frac{1}{\sqrt{K}} \sum_{j=- \frac{K-1}{2}}^{\frac{K-1}{2}}  \im j \exp \left( \frac{2 \pi \im jk}{K} \right) \tilde{u}_j } & (K:\mbox{odd}), \\ {\displaystyle \frac{1}{\sqrt{K}} \sum_{j=- \frac{K-2}{2}}^{\frac{K-2}{2}}  \im j \exp \left( \frac{2 \pi \im jk}{K} \right) \tilde{u}_j } & (K:\mbox{even}), \end{cases}
\label{eq_dps}
\end{equation}
where $ \tilde{u}_k $ is obtained by the discrete Fourier transform: 
\begin{equation}
\tilde{u}_k := \frac{1}{\sqrt{K}} \sum_{j=1}^K \exp \left( - \frac{2 \pi \im k j }{K} \right) \ud{m}{j}. 
\label{eq_dft}
\end{equation}
Note that, the implicit midpoint method for the semi-discretizations above 
coincide with the numerical schemes constructed in the previous sections. 

We consider the solution of the semi-discretizations above in the form $ u_k = \exp ( \im c_n t ) \exp ( \im n k \Delta x) $ ($ c_n \in \RR$)
for each $ n \in \mathbb{Z} \setminus \{ m \in \mathbb{Z} \mid 2 m / K \notin \mathbb{Z} \} $,
which gives an exact solutions of~\eqref{eq_lkg_cd}, \eqref{eq_lkg_ps}, and \eqref{eq_lkg_ad} with appropriate choices of $ c_n$. 
For the central difference scheme~\eqref{eq_lkg_cd}, we see
\begin{equation}
c^{\mathrm{CD}}_n = - \frac{ \Delta x }{ \sin n \Delta x }.
\end{equation}
If we employ the Fourier-spectral difference operator instead of the central difference, 
we see 
\begin{equation}
c^{\mathrm{PS}}_n = - \frac{1}{n} \qquad ( | n | < K/2 ),
\end{equation}
and $ c_{n+K} = c_{n} $ holds for any $ n \in \mathbb{Z} \setminus \{ m \in \mathbb{Z} \mid 2 m / K \notin \mathbb{Z} \} $. 
For the average-difference scheme~\eqref{eq_lkg_ad}, we obtain 
\begin{equation}
c^{\mathrm{AD}}_n = - \frac{\Delta x}{2  \tan ( n \Delta x/2 ) }.
\end{equation}

The phase speeds $ c_n $ corresponding to each numerical scheme are summarized in Fig.~\ref{fig_speed} ($ K = 65 $). 
As shown in Fig.~\ref{fig_speed}, the phase speed of the central difference scheme~\eqref{eq_lkg_cd}
are falsely too fast for high frequency components ($ n \approx K/2 $). 
On the other hand, the error of the phase speeds of the average-difference method are 
much smaller. 

\begin{figure}
\centering
\includegraphics{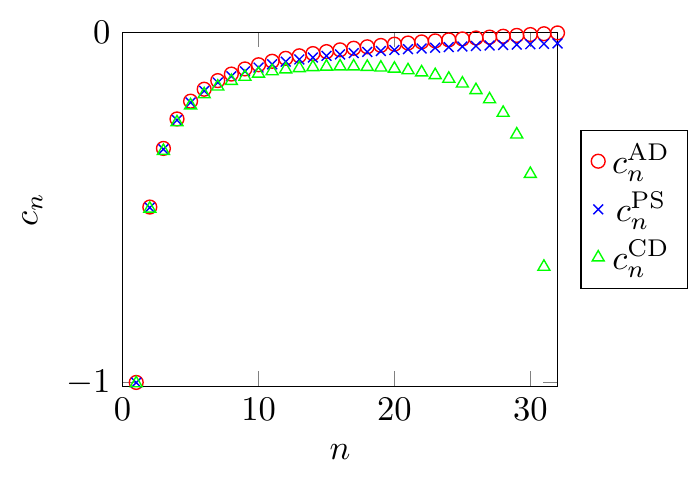}
\caption{The phase speeds for each $n =1,2,\dots,32 $ ($K:=65$). The red circles, blue crosses, and green triangles correspond to 
the average-difference, Fourier-spectral difference, and central difference schemes, respectively. }
\label{fig_speed}
\end{figure}

\subsection{Numerical experiment}

In this section, we conduct a numerical experiment under the periodic boundary condition $ u(t,x+2 \pi) = u (t,x) $ with the initial condition 
\begin{equation}
u_0(x) = \begin{cases}
1 \quad & ( \pi / 2 < x < 3 \pi / 2 ), \\
-1 \quad & (0\le x \le \pi/2 \text{ or } 3 \pi/2 \le x < 2 \pi).
\end{cases}
\end{equation}
The corresponding solution can be formally written in 
\begin{equation}
u ( t,x) = \sum_{n=1}^{\infty} \left( - \frac{4}{ n \pi} \sin \frac{ n\pi }{2} \right) \cos \left( n x - \frac{t}{n} \right).
\end{equation}

Figures~\ref{fig_step_CD}, \ref{fig_step_PS}, and \ref{fig_step_AD} show the 
numerical solutions of the central difference scheme~\eqref{eq_lkg_cd}, 
the Fourier-spectral difference scheme~\eqref{eq_lkg_ps}, 
and the average-difference method~\eqref{eq_lkg_ad}, respectively (the temporal discretization: implicit midpoint method). 
As shown in Fig.~\ref{fig_step_CD}, the central difference scheme suffers from the spatial oscillation, 
whereas the other schemes reproduce the smooth profiles until $ t = 1$. 
The cause of this difference lies on the discrepancy in phase speeds of high frequency components (Fig.~\ref{fig_speed}). 

\begin{figure}
\centering
\includegraphics{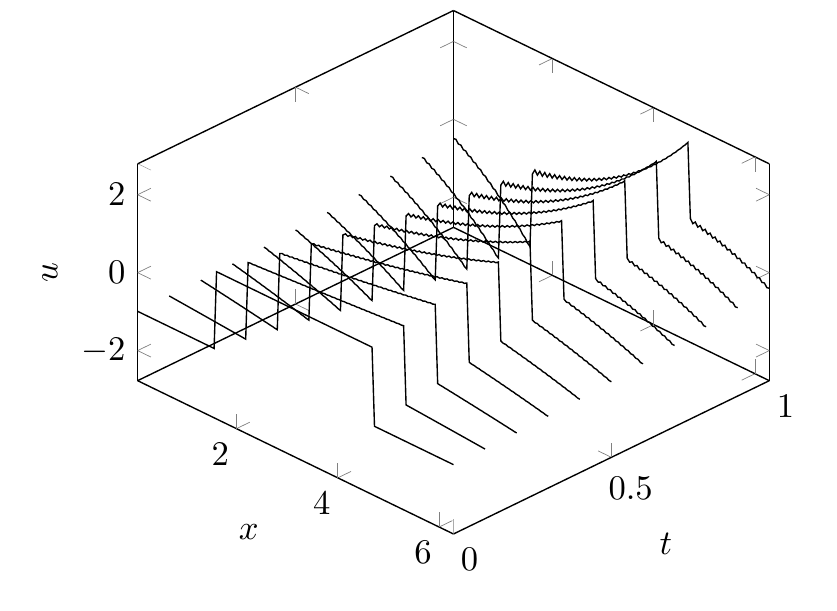}
\caption{The numerical solution of the central difference scheme~(7) ($K=129$, $\Delta t = 0.01$). }
\label{fig_step_CD}
\end{figure}
\begin{figure}
\centering
\includegraphics{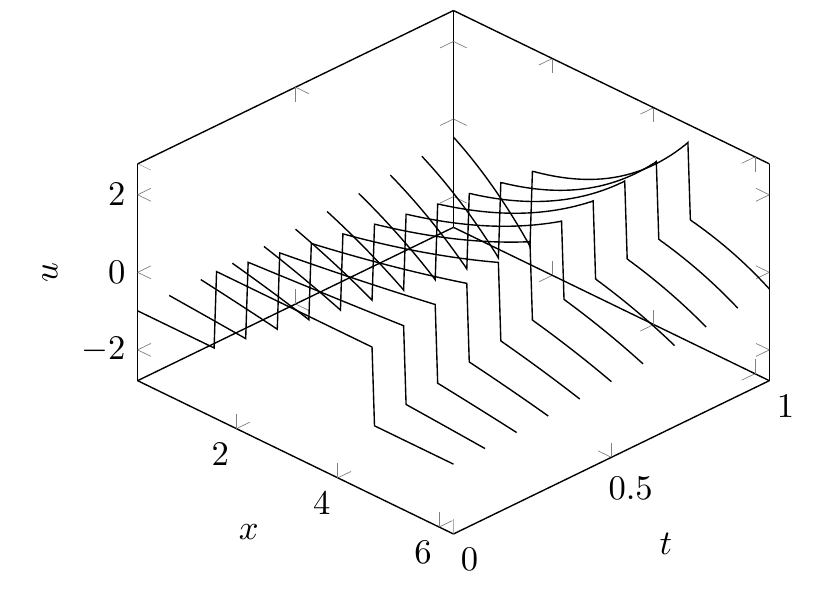}
\caption{The numerical solution of the Fourier-spectral difference scheme~(8) ($K=129$, $\Delta t = 0.01$).}
\label{fig_step_PS}
\end{figure}
\begin{figure}
\centering
\includegraphics{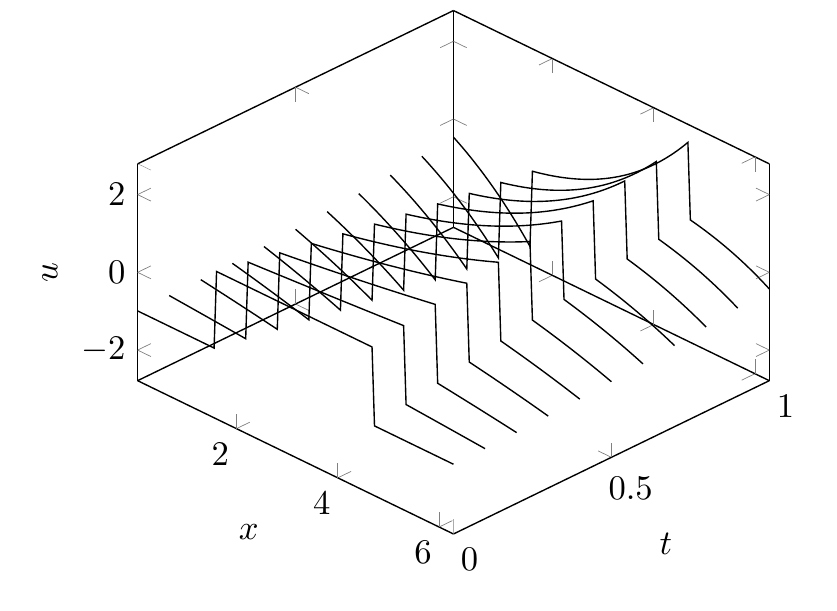}
\caption{The numerical solution of the average-difference scheme~(9) ($K=129$, $\Delta t = 0.01$).}
\label{fig_step_AD}
\end{figure}

However, as shown in Fig.~\ref{fig_comp}, which shows the numerical solutions of each schemes at $ t = 50 $, 
the Fourier-spectral scheme also suffers from the undesirable spatial oscillation, 
whereas the proposed method, average-difference method reproduces a better profile. 
Moreover, the values of error 
\[ \sum_{k=1}^K \left( \ud{M}{k} - u(M\Delta t, k \Delta x) \right)^2 \Delta x \]
 at $M=5000$ (i.e., $t=50$) for the numerical solutions of 
central difference scheme, Fourier-spectral difference scheme, and the proposed method
are $ 0.1940 $, $ 0.0611$, and $ 0.0575 $, respectively. 
This could be attributed to the fact that the Fourier-spectral difference can be regarded as a higher-order 
central difference, 
and thus should share the same property to a certain extent. 

\begin{figure}
\centering
\includegraphics{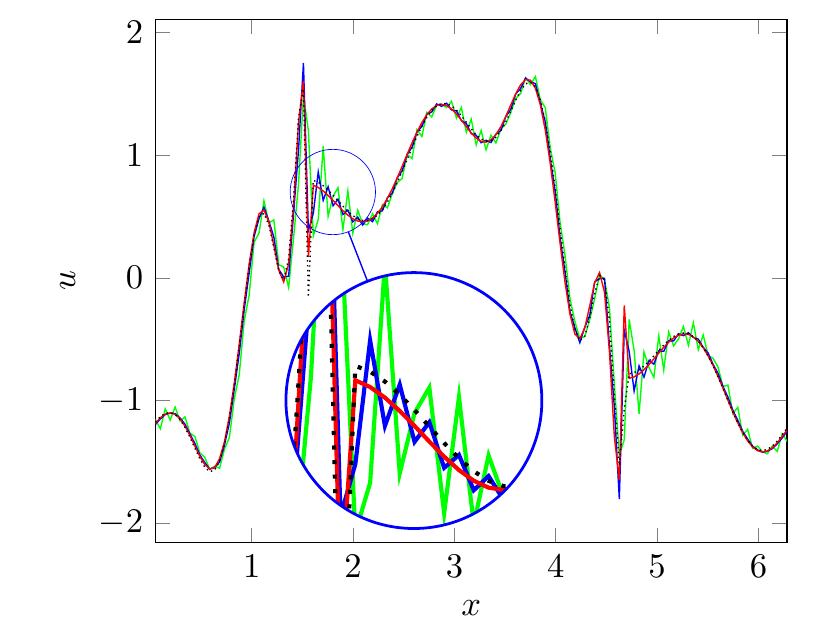}
\caption{The comparison of the numerical solutions at $ t = 50 $ ($K=129$, $\Delta t = 0.01$). 
The black dotted line represents the exact solution. 
The green, blue, and red solid lines denote the numerical solution of the central difference scheme, the Fourier-spectral difference scheme, and the average-difference method, respectively. }
\label{fig_comp}
\end{figure}

\section{Concluding remarks}
\label{sec_conc}


The results above can be extended in several ways. 
First, instead of the cumbersome proof in Theorem~\ref{thm_dvdm_ad}, we can introduce the concept of generalized skew-symmetry, by which a more sophisticated ``average-difference'' version of the DVDM could be given.
Second, we should try more general PDEs to see to which extent the new DVDM is advantageous.
Finally and ultimately, we hope to construct a systematic numerical framework for (1), based on the above observations.
The authors have already got some results on these issues, which will be reported somewhere soon.

\section*{Acknowledgments}
This work was partly supported by JSPS KAKENHI Grant Numbers 25287030, 26390126, and 15H03635, and by CREST, JST. 
The second author is supported by the JSPS Research Fellowship for Young Scientists.

\end{document}